\newtheorem{theorem}{Theorem}[section]
\newtheorem{corollary}[theorem]{Corollary}
\newtheorem{lemma}[theorem]{Lemma}
\newtheorem{proposition}[theorem]{Proposition}
\theoremstyle{definition}
\newtheorem{definition}[theorem]{Definition}
\theoremstyle{remark}
\newtheorem{remark}[theorem]{Remark}
\numberwithin{equation}{section}
\DeclareMathOperator{\diam}{diam}
\newcommand{\RR}{\mathbb{R}}
\begin{document}

\title[On quasisymmetric mappings]{On quasisymmetric mappings between ultrametric spaces}

\author{Evgeniy Petrov}
\address{Function theory department, The Institute of Applied Mathematics and Mechanics of the NAS of Ukraine, Dobrovolskogo str. 1, Slovyansk 84100, Ukraine}
\email{eugeniy.petrov@gmail.com}

\author{Ruslan Salimov}
\address{Complex analysis and potential theory, The Institute of Mathematics of the NAS of Ukraine, Tereschenkivska str. 3,  Kyiv, 01024, Ukraine}
\email{ruslan.salimov1@gmail.com}

\subjclass[2010]{Primary 54E35, 05C05;}
\keywords{finite ultrametric space, quasisymmetric mapping, representing tree, ball-preserving mapping}

% -----------------------------------------------------------

\begin{abstract}
In 1980 P. Tukia and J. V\"{a}is\"{a}l\"{a} in seminal paper~\cite{TV80} extended a concept of quasisymmetric mapping known from the theory of quasiconformal mappings to the case of general metric spaces.
They also found an estimation for the ratio of diameters of two subsets which are images of two bounded subsets of a metric space under a quasisymmetric mapping.
We improve this estimation for the case of ultrametric spaces. It was also shown that the image of an ultrametric space under an \mbox{$\eta$-quasi}\-symmetric mapping with $\eta(1)=1$ is again an ultrametric space. In the case of finite ultrametric spaces it is proved that such mappings are ball-preserving.
\end{abstract}

% -----------------------------------------------------------
\maketitle
% -----------------------------------------------------------
\section{Introduction.}
Quasisymmetric mappings on the real line were first introduced by Beurling and Ahlfors~\cite{BA}.
They found a way to obtain a quasiconformal extension of a quasisymmetric self-mapping of the real axis to a self-mapping of the upper half-plane.
This concept was later generalized by Tukia and V\"{a}is\"{a}l\"{a}~\cite{TV80}, who studied quasisymmetric mappings between general metric spaces.
In the recent years, these mappings are being intensively studied by many mathematicians, see e.g.,~\cite{AKT05, T98, HL15, BM13, YVZ18}.%WZ17,

In the present paper we focus on some properties of quasisymmetric mappings between ultrametric spaces. Let us mention several results in this direction. In~\cite{HMM10} a special type of quasisymmetric mappings,  so-called power quasisymmetric homeomorphisms, between bounded, complete, uniformly perfect, pseudo-doubling ultrametric spaces were investigated. It was found a bounded distortion property that characterizes such homeomorphisms. In~\cite[Section 15.4]{DS98} it was shown that any compact ultrametric space is quasisymmetrically equivalent to the ternary Cantor set if it is complete, doubling and uniformly perfect.

Recall that a \textit{metric} on a set $X$ is a function $d\colon X\times X\rightarrow \mathbb{R}^+$, $\mathbb R^+ = [0,\infty)$, such that for all $x, y, z \in X$:
\begin{enumerate}
\item $d(x,y)=d(y,x)$,
\item $(d(x,y)=0)\Leftrightarrow (x=y)$,
\item $d(x,y)\leq d(x,z)+d(z,y)$.
\end{enumerate}
If inequality \((iii)\) is replaced by $d(x,y)\leq \max \{d(x,z),d(z,y)\}$ (strong triangle inequality), then in this case $(X,d)$ is called an ultrametric space.

As it is adopted in the theory of quasiconformal mappings, under \emph{embedding} we understand \emph{injective} \emph{continuous} mapping between metric spaces with continuous inverse mapping. In other words, an embedding is a homeo\-morp\-hism on its image.

\begin{definition}[\cite{TV80}]
Let $(X,d)$, $(Y,\rho)$ be metric spaces. We shall say that an embedding $f\colon X\to Y$ is $\eta$-\emph{quasisymmetric} if there is a homeomorphism $\eta\colon [0, \infty)\to [0,\infty)$ so that
\begin{equation}\label{e0}
d(x,a)\leqslant t d(x,b)  \, \text{ implies } \, \rho(f(x),f(a))\leqslant \eta(t)\rho(f(x),f(b))
\end{equation}
for all triples $a,b,x$ of points in $X$ and for all $t>0$.
\end{definition}
%Thus, $f$ is quasisymmetric if it distorts relative distances by a bounded amount.

The following proposition was proved in~\cite{TV80}, see also~\cite[Propositon 10.8]{H01} for the extended proof.
\begin{proposition}
Let $X, Y$ be metric spaces and let $f$ be an $\eta$-quasisymmetric embedding. Let $A\subset B\subset X$ with $\diam(A)>0$, $\diam(B)<\infty$. Then $\diam(fB)<\infty$ and

\begin{equation}\label{e11}
  \frac{1}{2\eta\left(\frac{\diam B}{\diam A}\right)}\leqslant
  \frac{\diam f(A)}{\diam f(B)}\leqslant
  \eta\left( \frac{2\diam A}{\diam B}\right).
\end{equation}
\end{proposition}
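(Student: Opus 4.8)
The plan is to prove the two inequalities separately, in each case pitting two points that (nearly) realize the diameter of one set against a cleverly chosen auxiliary point, and then invoking the defining implication~\eqref{e0}. Throughout I would write $a=\diam A$ and $b=\diam B$, so that $0<a\le b<\infty$ since $A\subset B$; note also that because $f$ is injective and $\diam A>0$ we have $\diam f(A)>0$, so all ratios below are meaningful. The one elementary fact I will use repeatedly is that every point sees a roughly antipodal point: if $S$ has diameter $\sigma$ and $x\in S$, then for each $\eps>0$ there is $y\in S$ with $d(x,y)>(\sigma-\eps)/2$. Indeed, pick $p,q\in S$ with $d(p,q)>\sigma-\eps$; then $d(p,q)\le d(x,p)+d(x,q)$ forces $\max\{d(x,p),d(x,q)\}>(\sigma-\eps)/2$.

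For the upper bound I would fix $\eps>0$ and take any $x,a'\in A$. Then $d(x,a')\le a$, and applying the antipodal fact to $B$ (legitimate since $x\in A\subset B$) yields $b'\in B$ with $d(x,b')>(b-\eps)/2$, whence $d(x,a')\le a\le \tfrac{2a}{b-\eps}\,d(x,b')$. Plugging $t=\tfrac{2a}{b-\eps}$ into~\eqref{e0} gives $\rho(f(x),f(a'))\le \eta\!\big(\tfrac{2a}{b-\eps}\big)\rho(f(x),f(b'))\le \eta\!\big(\tfrac{2a}{b-\eps}\big)\diam f(B)$. Taking the supremum over $x,a'\in A$ and then letting $\eps\to0$, using monotonicity and continuity of $\eta$, produces $\diam f(A)\le \eta\!\big(\tfrac{2a}{b}\big)\diam f(B)$, which is the right-hand inequality.

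For the lower bound the key change is to pin down one auxiliary point once and for all. Fix $\eps\in(0,a)$ and choose a near-diameter pair $w,w'\in A$ with $d(w,w')>a-\eps$. Then every $u\in B$ satisfies $d(w,u)\le b<\tfrac{b}{a-\eps}d(w,w')$, so~\eqref{e0} with $t=\tfrac{b}{a-\eps}$ gives $\rho(f(w),f(u))\le \eta\!\big(\tfrac{b}{a-\eps}\big)\rho(f(w),f(w'))\le \eta\!\big(\tfrac{b}{a-\eps}\big)\diam f(A)$. In particular $\rho(f(w),f(u))$ is bounded by a finite constant independent of $u$, which already yields $\diam f(B)<\infty$. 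Routing an arbitrary pair $u,v\in B$ through $w$ by the triangle inequality, $\rho(f(u),f(v))\le \rho(f(u),f(w))+\rho(f(w),f(v))\le 2\eta\!\big(\tfrac{b}{a-\eps}\big)\diam f(A)$; taking the supremum and letting $\eps\to0$ gives $\diam f(B)\le 2\eta\!\big(\tfrac{b}{a}\big)\diam f(A)$, i.e. the left-hand inequality.

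I expect the only real subtlety to be the bookkeeping of constants: the factor $2$ and the exact argument of $\eta$ differ between the two bounds and are easy to lose. The factor $2$ in the upper estimate comes solely from the bisection in the antipodal fact, whereas in the lower estimate it comes from the single use of the triangle inequality; getting $\eta(b/a)$ rather than $\eta(2b/a)$ there relies precisely on freezing $w$ as an endpoint of a near-maximal pair in $A$, so that $d(w,u)/d(w,w')\le b/a$ holds uniformly in $u$. The passages to the supremum and the limits $\eps\to0$ are routine given that $\eta$ is a continuous increasing self-homeomorphism of $[0,\infty)$.
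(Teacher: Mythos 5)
Your argument is correct: both near-diametral selections are legitimate, each application of \eqref{e0} uses an admissible $t>0$, the passages to suprema and the limits $\eps\to0$ are justified by the monotonicity and continuity of $\eta$, and the finiteness of $\diam f(B)$ is genuinely established in the lower-bound step (the upper-bound step is harmless even before that, since it only uses $\diam f(B)$ as a possibly infinite supremum). Be aware, though, that the paper does not prove this proposition itself: it is quoted from \cite{TV80} with a pointer to \cite[Proposition 10.8]{H01}, and your proof is essentially the standard argument given there --- the ``antipodal'' bisection in $B$ for the upper bound (source of the $2$ inside $\eta$), and a frozen near-diametral pair in $A$ plus one triangle inequality for the lower bound (source of the $2$ outside $\eta$). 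The in-paper proof to compare with is that of Theorem~\ref{p2}, the ultrametric sharpening \eqref{e44} of \eqref{e11}. There the strong triangle inequality guarantees that at least one endpoint of a near-diametral pair $(b_n,b'_n)$ of $B$ lies at distance at least $d(b_n,b'_n)$ (not half of it) from a fixed $a\in A$, which eliminates the factor $2$ inside $\eta$; and the lower bound is obtained not directly but by applying the already-proved upper bound to $f^{-1}$, which is $\eta'$-quasisymmetric with $\eta'(t)=1/\eta^{-1}(t^{-1})$ by Proposition~\ref{p1} --- this duality also removes the outer factor $2$ that your triangle-inequality routing through $w$ necessarily produces. So your proof is exactly the right one for general metric spaces; it is worth understanding why neither of your two factors of $2$ can be removed without the ultrametric hypothesis.
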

In Theorem~\ref{p2} we show that in the case when $X$ and $Y$ are ultrametric spaces estimation~(\ref{e11}) can be improved.

Metric-preserving functions were in detail studied by many  mathematicians. Recall that a function $f\colon \RR^1\to \RR^1$ preserves metric if the composition $f\circ d$ is a metric on $X$ for any metric space $(X,d)$. Functions preserving ultrametrics were studied, for example, in~\cite{PT14,D20}. In this paper we understand the property to preserve ultrametricity in another way. In Proposition~\ref{p333} we describe $\eta$-quasisymmetric embeddings $f$ for which the image $f(X)$ of an ultrametric space $X$ is again ultrametric space. In the case of finite ultrametric spaces it is proved that such surjective quasisymmetric embeddings are ball-preserving mappings, Proposition~\ref{p32}, and, as a consequence, the existence of such quasisymmetric embeddings guarantees the isomorphism of representing trees of the spaces, Theorem~\ref{t34}.

Recall necessary concepts from the graph theory and some auxiliary results form the theory of ultrametric spaces.
%The \emph{spectrum} of a metric space $(X, d)$ is the set $$\Sp{X}=\{d(x,y)\colon x,y \in X\}.$$
A \textit{graph} is a pair $(V, E)$ consisting of a nonempty set $V$ and a (probably empty) set $E$ of unordered pairs of different points from $V$. For a graph $G=(V,E)$, the sets $V=V(G)$ and $E=E(G)$ are called the set of \textit{vertices} and the set of \textit{edges}, respectively.
A graph \(G\) together with a function \(l \colon V(G) \to \mathbb{R}^{+}\) is called a \emph{labeled} graph, and \(l\) is called a \emph{labeling} of \(G\).
A graph \(H\) is a \emph{subgraph} of a graph \(G\) if \(V(H) \subseteq V(G)\) and \(E(H) \subseteq E(G)\). In this case we write \(H \subseteq G\).
Recall that a \emph{path} is a nonempty graph $P$ for which
$$
V(P) = \{x_0,x_1,\ldots,x_k\} \quad \text{and} \quad E(P) = \{\{x_0,x_1\},\ldots,\{x_{k-1},x_k\}\},
$$
where all $x_i$ are distinct.
A connected graph without cycles is called a \emph{tree}. A tree $T$ may have a distinguished vertex called the \emph{root}; in this case $T$ is called a \emph{rooted tree}. Generally we follow terminology used in~\cite{BM}.

Let $k\geqslant 2$. A nonempty graph $G$ is called \emph{complete $k$-partite} if its vertices can be divided into $k$ disjoint nonempty sets $X_1,\ldots,X_k$ so that there are no edges joining the vertices of the same set $X_i$ and every two vertices from distinct \(X_i\) and \(X_j\), $1\leqslant i, j \leqslant k$ are adjacent. In this case we write $G=G[X_1,\ldots,X_k]$. We shall say that $G$ is a {\it complete multipartite graph} if there exists  $k \geqslant 2$ such that $G$ is complete $k$-partite, cf. \cite{Di}.

Recall that the quantity
$$
\diam X=\sup\{d(x,y)\colon x,y\in X\}
$$
is the \emph{diameter} of the space $(X,d)$.

\begin{definition}[\cite{DDP(P-adic)}]\label{d2}
Let $(X,d)$ be a finite ultrametric space with \(|X| \geqslant 2\). Define a graph $G_{D, X}$ as $V(G_{D, X})=X$ and
$$
(\{u,v\}\in E(G_{D, X}))\Leftrightarrow(d(u,v)=\diam X)
$$
for all \(u\), \(v \in V(G_{D, X})\). We call $G_{D, X}$ the \emph{diametrical graph} of $X$.
\end{definition}

\begin{theorem}[\cite{DDP(P-adic)}]\label{t13}
Let $(X, d)$ be a finite ultrametric space, $|X|\geqslant 2$. Then $G_{D, X}$ is complete multipartite.
\end{theorem}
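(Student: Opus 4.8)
The plan is to produce the required partition of $X$ explicitly, as the equivalence classes of a natural relation, and then to check that this partition exhibits $G_{D,X}$ as a complete multipartite graph. I would define a relation $\sim$ on $X$ by declaring $u \sim v$ whenever $d(u,v) < \diam X$. Since $|X| \geqslant 2$, the space contains two distinct points and hence $\diam X > 0$; moreover, because $X$ is finite the supremum defining $\diam X$ is attained, so $d(x_0,y_0) = \diam X$ for some $x_0 \neq y_0$. The point of the definition is that $u \sim v$ holds precisely when $u = v$ or $\{u,v\} \notin E(G_{D,X})$: indeed $d(u,v) \leqslant \diam X$ always, and equality is exactly the adjacency condition.

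The main step, and the only one where ultrametricity is used, is to verify that $\sim$ is an equivalence relation. Reflexivity is immediate from $d(u,u) = 0 < \diam X$, and symmetry from the symmetry of $d$. Transitivity is the crux: if $d(u,v) < \diam X$ and $d(v,w) < \diam X$, then the strong triangle inequality gives $d(u,w) \leqslant \max\{d(u,v),d(v,w)\} < \diam X$, so $u \sim w$. I expect this to be the essential obstacle in the sense that it is the one place the argument genuinely relies on the ultrametric hypothesis: the same relation defined from the ordinary triangle inequality need not be transitive, which is why the conclusion is special to ultrametric spaces.

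With $\sim$ an equivalence relation, I would take its classes $X_1,\ldots,X_k$, which partition $X$ into nonempty sets, and check the two defining properties of complete $k$-partiteness. If $u,v$ lie in the same class with $u \neq v$, then $u \sim v$ forces $d(u,v) < \diam X$, so $\{u,v\} \notin E(G_{D,X})$, i.e.\ no edge joins vertices of a single part. If instead $u \in X_i$ and $v \in X_j$ with $i \neq j$, then $u \not\sim v$ gives $d(u,v) \geqslant \diam X$, which together with $d(u,v) \leqslant \diam X$ yields $d(u,v) = \diam X$ and hence $\{u,v\} \in E(G_{D,X})$; thus every pair from distinct parts is adjacent. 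Finally, the distinct points $x_0,y_0$ with $d(x_0,y_0) = \diam X$ satisfy $x_0 \not\sim y_0$ and so lie in different classes, giving $k \geqslant 2$. This identifies $G_{D,X} = G[X_1,\ldots,X_k]$ as complete multipartite.
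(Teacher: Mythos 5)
Your proof is correct. The paper itself states this theorem as an imported result from \cite{DDP(P-adic)} without reproducing an argument, and your proof is exactly the standard one given there: declare $u\sim v$ when $d(u,v)<\diam X$, use the strong triangle inequality to get transitivity, and take the equivalence classes as the parts, with finiteness of $X$ guaranteeing that the diameter is attained and hence that $k\geqslant 2$. All the details you supply (attainment of the supremum, positivity of $\diam X$, and the two adjacency checks) are in order.
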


In 2001 at the Workshop on General Algebra~\cite{WGA} the attention of experts on the theory of lattices was paid to the following problem of I.~M.~Gelfand: using graph theory describe up to isometry all finite ultrametric spaces. An appropriate representation of ultrametric spaces by monotone rooted trees was proposed in~\cite{GurVyal(2012)} that can be considered in some sense as a solution of above mentioned problem.

The following procedure to every finite nonempty ultrametric space $(X, d)$ puts in correspondence a labeled rooted tree $T_X$ (see~\cite{PD(UMB)}). The root of \(T_X\) is the set \(X\). If $X$ is a one-point set, then $T_X$ is the rooted tree consisting of one node \(X\) with the label \(0\). Let $|X| \geqslant 2$. According to Theorem~\ref{t13} we have $G_{D, X} = G_{D, X}[X_1, \ldots, X_k]$. In this case the root of the tree $T_X$ is labeled by $l(X) = \diam X$ and, moreover, $T_X$ has the nodes $X_1, \ldots, X_k$, \(k \geqslant 2\), of the first level with the labels
\begin{equation}\label{e2.7}
l(X_i) = \diam X_i,
\end{equation}
$i = 1,\ldots,k$. The nodes of the first level indicated by \(0\) are leaves, and those indicated by $\diam X_i > 0$ are internal nodes of the tree $T_X$. If the first level has no internal nodes, then the tree $T_X$ is constructed. Otherwise, by repeating the above-described procedure with $X_i$ corresponding to the internal nodes of the first level, we obtain the nodes of the second level, etc. Since $X$ is a finite set, all vertices on some level will be leaves, and the construction of $T_X$ is completed. We shall say that the labeled rooted tree \(T_X\) is the \emph{representing tree} of \((X, d)\).

Note that the correspondence between trees and ultrametric spaces can be described in several different ways. A standard phylogenetic approach is to describe an ultrametric space as a set of leaves of an equidistant tree with suitable additive metric (see Theorem~\(5.2.5\) in~\cite{SS2003}). It should be noted that not only finite ultrametric spaces but also compact ones can be effectively studied with the help of equidistant trees \cite{H04, BH2}. Another interesting technique that allows one to investigate compact ultrametric spaces is the so-called comb representation (see~\cite{LamBr2017}). The categorical equivalence of arbitrary ultrametric spaces and special tree-like lattices was proved in~\cite{Le}.

The following well-known lemma shows how we can find the distance between points of \(X\) using the labeling \(l \colon V(T_X) \to \mathbb{R}^{+}\).
\begin{lemma}\label{l2}
Let $(X, d)$ be a finite ultrametric space and let $x_1$ and $x_2$ be two distinct points of \(X\). If $(\{x_1\}, v_1, \ldots, v_n, \{x_2\})$ is the path joining the leaves $\{x_1\}$ and $\{x_2\}$ in $T_X$, then
\begin{equation}\label{e1}
d(x_1, x_2) = \max\limits_{1\leqslant i \leqslant n} l({v}_i).
\end{equation}
\end{lemma}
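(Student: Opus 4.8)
The plan is to analyze the path through the lowest common ancestor of the two leaves and to exploit the strict decrease of labels along descending paths in $T_X$. The whole argument reduces to one monotonicity property of the labeling together with a direct reading of the diametrical graph at a single node.

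First I would single out, among $v_1, \ldots, v_n$, the unique node $W$ lying closest to the root of $T_X$; this is the lowest common ancestor of the leaves $\{x_1\}$ and $\{x_2\}$, and by the construction of $T_X$ it is an internal node with $x_1, x_2 \in W$ and $l(W) = \diam W$. Since $W$ is internal, Theorem~\ref{t13} gives $G_{D, W} = G_{D, W}[W_1, \ldots, W_m]$, and its children in $T_X$ are precisely $W_1, \ldots, W_m$. Because $W$ is the lowest common ancestor, the two branches of the path leaving $W$ enter distinct parts, say $x_1 \in W_p$ and $x_2 \in W_q$ with $p \ne q$. Vertices lying in different parts of a complete multipartite graph are adjacent, so $x_1$ and $x_2$ are adjacent in $G_{D, W}$, and Definition~\ref{d2} then yields $d(x_1, x_2) = \diam W = l(W)$.

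Next I would prove the monotonicity statement: for every internal node $U$ of $T_X$ and every child $U'$ of $U$ one has $l(U') < l(U)$. Indeed, $U'$ is one of the parts of $G_{D, U} = G_{D, U}[U_1, \ldots, U_s]$; any two points of $U'$ belong to the same part and are therefore non-adjacent in $G_{D, U}$, so by Definition~\ref{d2} their distance is different from $\diam U$, hence strictly smaller. Passing to the supremum gives $\diam U' < \diam U$, that is $l(U') < l(U)$ (the case of a singleton child, with label $0$, is trivially covered since $\diam U > 0$). Iterating along any descending chain of nodes shows that labels strictly decrease as one moves away from the root.

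Finally I would combine the two parts. Every $v_i$ with $v_i \ne W$ is a proper descendant of $W$ in $T_X$, lying on the ascending branch from $\{x_1\}$ to $W$ or on the descending branch from $W$ to $\{x_2\}$; applying the monotonicity property along the chain from $W$ down to $v_i$ gives $l(v_i) < l(W)$. Therefore $\max_{1 \leqslant i \leqslant n} l(v_i) = l(W) = d(x_1, x_2)$, which is \eqref{e1}. The main obstacle is the monotonicity lemma, and more precisely its strict form $\diam U' < \diam U$: this is exactly the place where the complete multipartite structure supplied by Theorem~\ref{t13} is essential, since it is that structure which forces every within-part distance to fall strictly below the diameter of the node.
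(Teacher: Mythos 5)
Your proof is correct and complete. The paper does not print its own argument for Lemma~\ref{l2} (it defers to Lemma~3.2 of \cite{PD(UMB)}), but your route --- locating the lowest common ancestor \(W\) on the path, reading \(d(x_1,x_2)=\diam W=l(W)\) off the complete multipartite structure of \(G_{D,W}\) given by Theorem~\ref{t13}, and showing that labels strictly decrease away from the root so that \(l(W)\) is the maximum over the path --- is exactly the standard argument that reference supplies; the one point worth stating explicitly is that finiteness of \(X\) is what turns ``every within-part distance is \(<\diam U\)'' into the strict inequality \(\diam U'<\diam U\), since the supremum is then attained.
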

The proof of this lemma is completely similar to the proof of Lemma~3.2 from~\cite{PD(UMB)}. Note only that for each tree \(T\) and every pair of distinct \(u\), \(v \in V(T)\) there is a single path joining \(u\) and \(v\) in \(T\).

Let $T = T(r)$ be a rooted tree. For every node $v$ of $T$ we denote by $T_v$ the induced subtree of $T$ such that \(v\) is the root of \(T_v\) and
\begin{equation}\label{e1.3}
V(T_v) = \{u \in T \colon u \text{ is a successor of } v\}.
\end{equation}
In particular, we have \(T(r) = T_r\) for the case when \(v = r\).

We use the denotation \(\overline{L}(T_v)\) for the set of all leaves of \(T_v\). If \(T = T_X\) is a representing tree of a finite ultrametric space \((X, d)\), \(v \in V(T_X)\) and $\overline{L}(T_v) = \{\{x_1\}, \ldots, \{x_n\}\}$,
then, for simplicity we write $L(T_v) = \{x_1, \ldots, x_n\}$.
Thus, the equality \(v = L(T_v)\) holds for every \(v \in V(T_X)\). Note that for a representing tree $T_X$ consisting of one node only, $V(T_X) = X$, we consider that \(X\) is the root of $T_X$ as well as a leaf of \(T_X\). Thus, if \(X = \{x\}\), then we have \(\overline{L}(T_X) = \{\{x\}\}\) and $L(T_X) = \{x\} = X$.

Let $(X,d)$ be a metric space. A \emph{closed ball} with a radius $r \geqslant 0$ and a center $c\in X$ is the set
$$
B_r(c)=\{x\in X\colon d(x,c)\leqslant r\}.
$$
The \emph{ballean} $\mathbf{B}_X$ of the metric space $(X,d)$ is the set of all closed balls in $(X,d)$. We shall call the elements $B$ of $\mathbf{B}_X$ the balls in $(X, d)$ for short. Note that every one-point subset of $X$ belongs to $\mathbf{B}_X$.

The following proposition claims that the ballean of a finite ultrametric space $(X,d)$ is the vertex set of representing tree $T_X$.
\begin{proposition}\label{lbpm}
Let $(X,d)$ be a finite nonempty ultrametric space with the representing tree $T_X$. Then the following statements hold.
\begin{itemize}
\item [$(i)$] $L(T_v)$ belongs to $\mathbf{B}_X$ for every node $v\in V(T_X)$.
\item [$(ii)$] For every $B \in \mathbf{B}_X$ there exists the unique node $v$ such that $L(T_v)=B$.
\end{itemize}
\end{proposition}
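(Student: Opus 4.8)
The plan is to prove both statements by exploiting the monotonicity of the labeling along root-to-leaf paths together with Lemma~\ref{l2}. The key structural fact I would establish first is that in a representing tree the labels strictly decrease from any internal node to each of its children: if $w$ is internal with children $X_1,\dots,X_k$ obtained from the diametrical graph $G_{D,L(T_w)}$, then two points lying in the same part $X_i$ are non-adjacent in $G_{D,L(T_w)}$ and hence are at distance strictly less than $\diam L(T_w)=l(w)$; thus $l(X_i)=\diam X_i<l(w)$. Consequently, whenever $u$ is a proper ancestor of an internal node $v$, one has $l(u)>l(v)$.

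For part $(i)$, I would fix a node $v$, choose any $c\in L(T_v)$, and set $r=l(v)=\diam L(T_v)$, claiming that $L(T_v)=B_r(c)$. The inclusion $L(T_v)\subseteq B_r(c)$ is immediate from $r=\diam L(T_v)$. For the reverse inclusion I argue by contradiction: if $x\in B_r(c)$ but $x\notin L(T_v)$, I consider the lowest common ancestor $u$ of the leaves $\{x\}$ and $\{c\}$. Because $c\in L(T_v)$ while $x\notin L(T_v)$, the node $u$ must be a proper ancestor of $v$, so $l(u)>l(v)=r$ by the monotonicity above. Since $u$ lies on the path joining $\{c\}$ and $\{x\}$, Lemma~\ref{l2} gives $d(c,x)\geqslant l(u)>r$, contradicting $x\in B_r(c)$. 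The degenerate case where $v$ is a leaf is trivial, since then $r=0$ and $B_0(c)=\{c\}=L(T_v)$.

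For the existence part of $(ii)$, I would write a given ball as $B=B_r(c)$ and walk up the chain of ancestors $\{c\}=v_0\subsetneq v_1\subsetneq\cdots\subsetneq v_m=X$ of the leaf $\{c\}$, whose labels satisfy $0=l(v_0)<l(v_1)<\cdots<l(v_m)=\diam X$. Using Lemma~\ref{l2} one checks that for $x\neq c$ the distance $d(c,x)$ equals the label of the lowest common ancestor of $\{c\}$ and $\{x\}$, so the only distances realized from $c$ are $l(v_0),\dots,l(v_m)$. Hence, taking $j$ to be the largest index with $l(v_j)\leqslant r$, every point within distance $r$ of $c$ lies within distance $l(v_j)$, giving $B_r(c)=B_{l(v_j)}(c)=L(T_{v_j})$ by part $(i)$; thus $v=v_j$ is the desired node. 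Uniqueness is then immediate from the identity $v=L(T_v)$ recorded after the construction: if $L(T_v)=L(T_{v'})=B$, then $v=v'$.

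The main obstacle is the reverse inclusion $B_r(c)\subseteq L(T_v)$ in part $(i)$; this is where the whole argument really rests, and it is precisely here that one must combine the strict monotonicity of labels with the path-distance formula of Lemma~\ref{l2}. Once that is in hand, the characterization of the balls centered at a fixed point in $(ii)$ follows almost mechanically, and uniqueness is a formality.
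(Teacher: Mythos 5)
Your argument is correct and complete. Note, however, that the paper does not actually prove Proposition~\ref{lbpm} -- it only refers the reader to~\cite{P(TIAMM)} -- so there is no in-text proof to compare against; what you have written is the standard self-contained argument that the cited reference supplies. Your three ingredients are exactly the right ones: the strict decrease of labels from an internal node to its children (which follows from non-adjacency inside a part of the complete multipartite diametrical graph of Theorem~\ref{t13}, and which uses finiteness of $X$ to guarantee the maximum $\diam X_i$ is attained and hence strictly below $l(w)$); the identification $L(T_v)=B_{l(v)}(c)$ for any $c\in L(T_v)$ via Lemma~\ref{l2} applied to the lowest common ancestor; and the observation that the distances from a fixed center $c$ take values only in the finite label set of the ancestors of $\{c\}$, so every closed ball $B_r(c)$ coincides with $B_{l(v_j)}(c)$ for the largest admissible $j$. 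The uniqueness in part $(ii)$ is indeed immediate from the identity $v=L(T_v)$ that the paper records after the construction of $T_X$. I see no gaps.
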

The proof of this proposition can be found in~\cite{P(TIAMM)}.

\section{Distortion of the ratio of diameters}

In order to prove the main result of this section we need the following.
\begin{proposition}[\cite{TV80}]\label{p1}
Let $(X,d)$ and $(Y,\rho)$ be metric spaces.
If $f\colon X\to Y$ is an $\eta$-quasi\-sym\-metric embedding, then $f^{-1}\colon f(X)\to X$ is also an $\eta'$-quasisymmetric embedding, where
\begin{equation}\label{e81}
\eta'(t) = 1 / \eta^{-1}(t^{-1})
\end{equation}
for $t>0$.
\end{proposition}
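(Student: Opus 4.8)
The plan is to recast the quasisymmetry inequality~\eqref{e0} in its standard multiplicative form and then interchange the roles of the two moving points. Since $\eta$ is an increasing homeomorphism, choosing $t = d(x,a)/d(x,b)$ in~\eqref{e0} shows that for every triple $a,b,x\in X$ with $x\neq b$ one has the tight estimate $\rho(f(x),f(a)) \leqslant \eta\bigl(d(x,a)/d(x,b)\bigr)\,\rho(f(x),f(b))$; conversely, this form recovers~\eqref{e0} by monotonicity of $\eta$. Hence it suffices to work with the ratio inequality $\rho(f(x),f(a))/\rho(f(x),f(b)) \leqslant \eta\bigl(d(x,a)/d(x,b)\bigr)$ for distinct triples.

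Next I would take an arbitrary triple of points in $f(X)$, write them as $f(x),f(a),f(b)$ with $x,a,b\in X$ (using that the embedding $f$ is injective, so $f^{-1}$ is well defined), and assume the hypothesis of quasisymmetry for $f^{-1}$, namely $\rho(f(x),f(a)) \leqslant t\,\rho(f(x),f(b))$. The key step is to apply the tight form above \emph{with $a$ and $b$ interchanged}, which gives $\rho(f(x),f(b)) \leqslant \eta\bigl(d(x,b)/d(x,a)\bigr)\,\rho(f(x),f(a))$, equivalently $\rho(f(x),f(a))/\rho(f(x),f(b)) \geqslant 1/\eta\bigl(d(x,b)/d(x,a)\bigr)$. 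Writing $s = d(x,a)/d(x,b)$, the chain $1/\eta(1/s) \leqslant \rho(f(x),f(a))/\rho(f(x),f(b)) \leqslant t$ yields $\eta(1/s) \geqslant 1/t$, whence $1/s \geqslant \eta^{-1}(t^{-1})$ by monotonicity of $\eta^{-1}$, and finally $d(x,a)/d(x,b) = s \leqslant 1/\eta^{-1}(t^{-1}) = \eta'(t)$. This is precisely the conclusion $d(f^{-1}(f(x)),f^{-1}(f(a))) \leqslant \eta'(t)\,d(f^{-1}(f(x)),f^{-1}(f(b)))$ required for $\eta'$-quasisymmetry of $f^{-1}$.

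It then remains to check that $\eta'$ given by~\eqref{e81} is genuinely a homeomorphism of $[0,\infty)$: since $\eta^{-1}$ is an increasing homeomorphism, $t\mapsto \eta^{-1}(t^{-1})$ is decreasing, so $\eta'(t) = 1/\eta^{-1}(t^{-1})$ is increasing, with $\eta'(t)\to 0$ as $t\to 0^{+}$ and $\eta'(t)\to\infty$ as $t\to\infty$; setting $\eta'(0)=0$ extends it continuously. I do not expect a serious obstacle here, as the argument is a direct inversion of the hypothesis; the only mildly delicate bookkeeping is verifying the boundary behavior of $\eta'$ together with the degenerate triples (when some of the points coincide), where both sides of~\eqref{e0} vanish and the ratio inequalities must be handled separately. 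The genuine crux of the proof is simply the interchange of $a$ and $b$ combined with the monotone inversion of $\eta$.
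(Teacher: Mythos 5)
Your argument is correct and is essentially the paper's own proof: both hinge on applying the quasisymmetry inequality~\eqref{e0} with the roles of $a$ and $b$ interchanged and then inverting the monotone homeomorphism $\eta$. The only cosmetic difference is that you run the argument directly (deriving $d(x,a)/d(x,b)\leqslant \eta'(t)$ from the hypothesis), whereas the paper phrases the same manipulation as a proof by contradiction, assuming $d(x,a)>\eta'(t)\,d(x,b)$ and deducing $\rho(f(x),f(b))<t^{-1}\rho(f(x),f(a))$.
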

\begin{proof}
Let $a_1, b_1,x_1 \in Y$ and let $a=f^{-1}(a_1)$, $b=f^{-1}(b_1)$, and $x=f^{-1}(x_1)$. Let us prove the proposition by contradiction. Assume that
$$
\rho(x_1,a_1)\leqslant t\rho(x_1,b_1) \, \text{ but } \,  d(x,a)> \eta'(t)d(x,b).
$$
Then
$d(x,b)< \eta^{-1}(t^{-1})d(x,a)$. Using~(\ref{e0}) we get $\rho(x_1,b_1)< t^{-1}\rho(x_1,a_1)$, which contradicts our assumption.
\end{proof}

\begin{theorem}\label{p2}
Let $(X,d)$ and $(Y,\rho)$ be ultrametric spaces. And let $f\colon X\to Y$ be an $\eta$-quasisymmetric embedding. Then $f$ maps bounded subpaces to bounded subspaces. Moreover, if $A\subseteq B\subseteq X$, $0< \operatorname{diam} A,  \operatorname{diam} B <\infty$,  then $\operatorname{diam} f(B)$ is finite and
\begin{equation}\label{e44}
  \frac{1}{\eta\left(\frac{\diam B}{\diam A}\right)}\leqslant
  \frac{\diam f(A)}{\diam f(B)}\leqslant
  \eta\left( \frac{\diam A}{\diam B}\right).
\end{equation}
\end{theorem}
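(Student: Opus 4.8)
The plan is to isolate the single ultrametric fact that removes the factors of $2$ present in~(\ref{e11}): if $\diam B=\beta>0$ in an ultrametric space, then for every $x\in B$ and every $\eps>0$ there exists $b\in B$ with $d(x,b)>\beta-\eps$. Indeed, choosing $y,z\in B$ with $d(y,z)>\beta-\eps$, the strong triangle inequality gives $\beta-\eps<d(y,z)\leqslant\max\{d(x,y),d(x,z)\}$, so one of $y,z$ may be taken as $b$. In a general metric space one only gets $\max\{d(x,y),d(x,z)\}\geqslant\tfrac12 d(y,z)$, which is precisely where the constant $2$ enters the classical estimate; this observation is what makes the constants in~(\ref{e44}) sharp.

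I would first settle boundedness. If $\diam B=0$ then $f(B)$ is a single point. Otherwise fix distinct $p,q\in B$ with $d(p,q)=\gamma>0$. For every $b\in B$ we have $d(p,b)\leqslant\beta=\tfrac{\beta}{\gamma}\,d(p,q)$, so~(\ref{e0}) yields $\rho(f(p),f(b))\leqslant\eta(\beta/\gamma)\,\rho(f(p),f(q))$. Thus all points of $f(B)$ lie in one closed ball about $f(p)$, whence $\diam f(B)<\infty$. In particular $\diam f(A)<\infty$, and since $f$ is injective and $\diam A>0$ we also have $\diam f(A)>0$, so the ratio in~(\ref{e44}) is well defined.

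For the upper bound, write $\alpha=\diam A$, $\beta=\diam B$, so $0<\alpha\leqslant\beta$. Fix $x,a\in A$ and $\eps\in(0,\beta)$. By the fact noted above choose $b\in B$ with $d(x,b)>\beta-\eps$; then $d(x,a)\leqslant\alpha<\tfrac{\alpha}{\beta-\eps}\,d(x,b)$, so~(\ref{e0}) gives $\rho(f(x),f(a))\leqslant\eta\!\left(\tfrac{\alpha}{\beta-\eps}\right)\rho(f(x),f(b))\leqslant\eta\!\left(\tfrac{\alpha}{\beta-\eps}\right)\diam f(B)$. Taking the supremum over $x,a\in A$ and letting $\eps\to0^+$, using continuity of $\eta$, I obtain $\diam f(A)\leqslant\eta(\alpha/\beta)\,\diam f(B)$, the right inequality in~(\ref{e44}).

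For the lower bound, rather than repeat the argument I would apply the upper bound to the inverse map. By Proposition~\ref{p1}, $f^{-1}\colon f(X)\to X$ is $\eta'$-quasisymmetric with $\eta'(t)=1/\eta^{-1}(t^{-1})$, and $f(A)\subseteq f(B)$ are bounded subsets of the ultrametric space $Y$ with positive finite diameters. The upper bound applied to $f^{-1}$ and these sets gives $\tfrac{\diam A}{\diam B}\leqslant\eta'\!\left(\tfrac{\diam f(A)}{\diam f(B)}\right)$. Writing $s=\diam f(A)/\diam f(B)$ and unwinding $\eta'$ turns this into $\eta^{-1}(1/s)\leqslant\beta/\alpha$, hence $1/s\leqslant\eta(\beta/\alpha)$ and finally $s\geqslant 1/\eta(\beta/\alpha)$, the left inequality in~(\ref{e44}). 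The only genuinely delicate points are the passage to the limit $\eps\to0^+$ (handled by continuity of $\eta$) and verifying that $f(A),f(B)$ meet the hypotheses needed to reuse the upper bound for $f^{-1}$; everything else is bookkeeping once the ultrametric observation is in hand.
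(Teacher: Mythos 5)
Your proof is correct and takes essentially the same route as the paper's: both exploit the strong triangle inequality to produce, for a given point of $A$, a point of $B$ at nearly diametrical distance (the paper via the pair $(b_n,b'_n)$ and a swap relative to $a$, you via an explicit $\eps$-lemma relative to $x$), and both derive the lower bound by applying the upper bound to $f^{-1}$ through Proposition~\ref{p1}. The only cosmetic difference is in the boundedness step, where the paper invokes the ultrametric inequality in $Y$ while your ball argument does not need it.
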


\begin{proof}
Let $(b_n)$ and $(b'_n)$ be sequences in $B$ such that
%To show that $\operatorname{diam} f(B)$ is finite without loss of generality consider that
$$
\frac{1}{2}\operatorname{diam} B \leqslant d(b_n,b'_n)\to \operatorname{diam} B, \text{ as } n \to \infty.
$$
For every $x\in B$ we have
$$
d(x,b_1)\leqslant \operatorname{diam} B \leqslant 2d(b_1,b'_1)
$$
by~(\ref{e0}) implying
$$
\rho(f(x),f(b_1))\leqslant \eta(2)\rho(f(b_1),f(b'_1)).
$$
In order to see that $\operatorname{diam} f(B)<\infty$ for any $x,y \in B$ consider the inequalities
$$
\rho(f(x),f(y)) \leqslant \max\{\rho(f(x),f(b_1)), \rho(f(y),f(b_1))\}
$$
$$
\leqslant  \max\{\eta(2)\rho(f(b_1),f(b'_1)), \eta(2)\rho(f(b_1),f(b'_1))\}= \eta(2)\rho(f(b_1),f(b'_1)).
$$

Let $x, a\in A$. To prove inequality~(\ref{e44}) %for all $x,a \in A$
consider the evident inequality
$$
d(a,x)\leqslant \frac{d(x,a)}{d(b_n,a)}d(a,b_n)
$$
which by~(\ref{e0}) implies
\begin{equation}\label{e13}
\rho(f(x),f(a))\leqslant \eta \left( \frac{d(x,a)}{d(b_n,a)} \right)\rho(f(b_n),f(a)).
\end{equation}
Without loss of generality (if needed swapping $b_n$ and $b'_n$) we may assume that
$$
d(b'_n,a)\leqslant d(b_n,a).
$$
By ultrametric triangle inequality we have
$$
d(b_n, b'_n)\leqslant \max\{d(b_n, a), d(a, b'_n)\}=d(b_n, a).
$$
Using the last inequality and the relations $d(x,a)\leqslant \operatorname{diam} A$, $A\subseteq B$, from~(\ref{e13}) we have
$$
\rho(f(x),f(a))\leqslant \eta \left( \frac{\operatorname{diam} A}{d(b_n, b'_n)} \right)\operatorname{diam} f(B).
$$
Since $ d(b_n,b'_n)\to \operatorname{diam} B$ we have the second inequality in~(\ref{e44}).

%------------

By Proposition~\ref{p1} $f^{-1}$ is an $\eta'$-quasisymmetric embedding.
Since
$$f(A)\subseteq f(B)\subseteq Y \, \text{ and } \, 0<\diam f(A), \diam f(B)< \infty,$$ applying the second inequality in~(\ref{e4}) to $f^{-1}$  we have
$$
\frac{\diam A}{\diam B}\leqslant \eta'\left( \frac{\diam f(A)}{\diam f(B)}\right).
$$
From~(\ref{e81}) we have
$$
\frac{\diam A}{\diam B}\leqslant \left(\eta^{-1}\left( \frac{\diam f(B)}{\diam f(A)}\right)\right)^{-1}.
$$
Hence,
$$
\eta^{-1}\left( \frac{\diam f(B)}{\diam f(A)}\right) \leqslant \frac{\diam B}{\diam A}
$$
and, since $\eta$ is strictly increasing, we have
$$
\frac{\diam f(B)}{\diam f(A)} \leqslant \eta\left( \frac{\diam B}{\diam A}\right),
$$
which imply the first inequality in~(\ref{e44}).
\end{proof}

\begin{corollary}\label{c2}
Let $(X,d)$ and $(Y,\rho)$ be ultrametric spaces. Let $f\colon X\to Y$ be a surjective $\eta$-quasisymmetric embedding and let $(X,d)$ be a bounded space. Then $(Y,\rho)$ is also bounded and
\begin{equation*}%\label{e44}
  \frac{\diam Y}{\eta\left(\frac{\diam X}{d(x,y)}\right)}\leqslant
   \rho(f(x), f(y))
  \leqslant \diam Y\eta\left( \frac{d(x,y)}{\diam X}\right).
\end{equation*}
\end{corollary}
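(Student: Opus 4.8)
The plan is to obtain the corollary directly from Theorem~\ref{p2} by specializing the two nested subsets. Fix distinct points $x, y \in X$ and apply the theorem with $B = X$ and $A = \{x, y\}$. With these choices $\diam A = d(x,y)$ and $\diam B = \diam X$, while $f(A) = \{f(x), f(y)\}$ yields $\diam f(A) = \rho(f(x), f(y))$; surjectivity of $f$ gives $f(B) = f(X) = Y$ and hence $\diam f(B) = \diam Y$.

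First I would verify the hypotheses of Theorem~\ref{p2}. The inclusion $A \subseteq B \subseteq X$ is immediate; $\diam A = d(x,y) > 0$ since $x \neq y$; and $\diam B = \diam X < \infty$ since $X$ is bounded, with $\diam X > 0$ because $X$ contains the two distinct points $x, y$. The boundedness of $(Y, \rho)$ is not assumed, so I would extract it from the first assertion of Theorem~\ref{p2}: since $f$ maps bounded subspaces to bounded subspaces and $X$ is bounded, $Y = f(X)$ is bounded, which gives $0 < \diam f(A), \diam f(B) < \infty$, the positivity following from injectivity of the embedding $f$.

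Then I would substitute these quantities into inequality~(\ref{e44}), which becomes
$$
  \frac{1}{\eta\left(\frac{\diam X}{d(x,y)}\right)}\leqslant
  \frac{\rho(f(x), f(y))}{\diam Y}\leqslant
  \eta\left( \frac{d(x,y)}{\diam X}\right),
$$
and multiply throughout by the positive finite number $\diam Y$ to arrive at the stated double inequality.

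There is no genuine obstacle in this argument, as it is a direct specialization of the preceding theorem. The only point meriting attention is that the boundedness of $Y$ must be deduced from the transport-of-boundedness clause of Theorem~\ref{p2} rather than presupposed, since otherwise the quantity $\diam Y$ appearing in the bounds would not \emph{a priori} be finite.
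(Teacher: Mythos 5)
Your proposal is correct and follows exactly the paper's route: the paper's entire proof of Corollary~\ref{c2} is the one-line observation that it suffices to set $B=X$ and $A=\{x,y\}$ in Theorem~\ref{p2}. You have merely spelled out the verification of the hypotheses and the source of the boundedness of $Y$, which the paper leaves implicit.
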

\begin{proof}
It suffices to set $B=X$ and $A=\{x,y\}$ in Theorem~\ref{p2}.
\end{proof}

Let $(X,d)$, $(Y,\rho)$ be metric spaces. Recall that a function $f\colon X \to Y$ is called $L$-\emph{bi-Lipschitz} if there exists $L\geqslant 1$ such that the double inequality
\begin{equation*}%\label{e31}
\frac{1}{L} d(x,y)\leqslant \rho(f(x),f(y))\leqslant L d(x,y)
\end{equation*}
holds for all $x,y \in X$.

Corollary~\ref{c2} implies the following.
\begin{corollary}\label{c3}
Let $(X,d)$ and $(Y,\rho)$ be ultrametric spaces. Let $f\colon X\to Y$ be a surjective $Ct$-quasisymmetric embedding, $C>0$,  and let $(X,d)$ be a bounded space. Then $(Y,\rho)$ is also bounded and $f$ is $L$-bi-Lipshitz mapping with
\begin{equation*}%\label{e44}
 L=C\max\{\diam Y / \diam X, \diam X / \diam Y \}.
\end{equation*}
\end{corollary}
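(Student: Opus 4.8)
The plan is to apply Corollary~\ref{c2} directly with the explicit control function $\eta(t)=Ct$ and then simply read off the bi-Lipschitz constant from the resulting two-sided estimate; no new machinery is required. First I would invoke Corollary~\ref{c2}, which already delivers the boundedness of $(Y,\rho)$ and the inequalities
\begin{equation*}
\frac{\diam Y}{\eta\left(\frac{\diam X}{d(x,y)}\right)}\leqslant \rho(f(x),f(y))\leqslant \diam Y\,\eta\left(\frac{d(x,y)}{\diam X}\right)
\end{equation*}
for all $x,y\in X$. Thus the entire content of the statement reduces to specializing $\eta$.

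Next I would substitute $\eta(t)=Ct$. The left-hand denominator becomes $C\,\diam X/d(x,y)$ and the right-hand factor becomes $C\,d(x,y)/\diam X$, so the estimate collapses to
\begin{equation*}
\frac{\diam Y}{C\,\diam X}\,d(x,y)\leqslant \rho(f(x),f(y))\leqslant \frac{C\,\diam Y}{\diam X}\,d(x,y).
\end{equation*}
Comparing this with the defining inequality of an $L$-bi-Lipschitz map, the lower bound forces $L\geqslant C\,\diam X/\diam Y$ while the upper bound forces $L\geqslant C\,\diam Y/\diam X$, so the smallest admissible constant is $L=C\max\{\diam Y/\diam X,\ \diam X/\diam Y\}$, exactly as claimed.

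The only point that deserves genuine care, and which I regard as the one (minor) obstacle, is verifying that this $L$ meets the requirement $L\geqslant 1$ built into the definition of a bi-Lipschitz map. The factor $\max\{\diam Y/\diam X,\ \diam X/\diam Y\}$ is always at least $1$, so it suffices to check that $C\geqslant 1$. I would obtain this from quasisymmetry itself: for distinct $x,a,b$ set $s=d(x,a)/d(x,b)$ and apply~\eqref{e0} with $\eta(t)=Ct$ in both directions to get $\rho(f(x),f(a))\leqslant Cs\,\rho(f(x),f(b))$ and $\rho(f(x),f(b))\leqslant Cs^{-1}\rho(f(x),f(a))$; multiplying these (each factor positive since $f$ is injective) yields $C^2\geqslant 1$. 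Hence $L\geqslant 1$, and the degenerate cases $|X|\leqslant 2$ are handled directly, completing the argument.
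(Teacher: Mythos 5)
Your proposal is correct and follows exactly the route the paper intends: the paper gives no separate proof of Corollary~\ref{c3} beyond the remark that it follows from Corollary~\ref{c2}, and your substitution of $\eta(t)=Ct$ into that two-sided estimate is precisely that deduction. Your additional check that $C\geqslant 1$ (so that $L\geqslant 1$ as the definition of bi-Lipschitz requires) is a sensible extra detail the paper leaves implicit.
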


\section{Ultrametric-preserving $\eta$-quasisymmetric embeddings.}

In this section we show that the image of a finite ultrametric space $X$ under an $\eta$-quasisymmetric embedding $f\colon X\to Y$ is again ultrametric space in the case if $\eta(1)=1$. Moreover, under a supposition that $f$ is surjective and $X$ is finite it turns out that the mapping $f$ is ball-preserving and as a consequence the existence of such mapping guarantees the isomorphism of the representing trees $T_X$ and $T_Y$.

\begin{lemma}\label{l32}
Let $(X,d)$, $(Y,\rho)$ be metric spaces and let $f\colon X\to Y$ be an $\eta$-quasisymmetric embedding with $\eta(1)=1$. Then the following implications hold
\begin{equation}\label{e2}
d(x,a)=d(x,b)  \, \text{ implies } \, \rho(f(x),f(a))= \rho(f(x),f(b))
\end{equation}
\begin{equation}\label{e3}
d(x,a)< d(x,b)  \, \text{ implies } \, \rho(f(x),f(a))< \rho(f(x),f(b))
\end{equation}
for all triples $a,b,x$ of points in $X$.
\end{lemma}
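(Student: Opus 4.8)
The plan is to reduce both implications to the defining inequality~(\ref{e0}) evaluated at carefully chosen values of $t$, exploiting the two facts that $\eta(1)=1$ and that $\eta$, being a homeomorphism of $[0,\infty)$ with $\eta(0)=0$, is strictly increasing.

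First I would establish~(\ref{e2}). If $d(x,a)=d(x,b)$, then in particular $d(x,a)\leqslant 1\cdot d(x,b)$, so applying~(\ref{e0}) with $t=1$ gives $\rho(f(x),f(a))\leqslant \eta(1)\rho(f(x),f(b))=\rho(f(x),f(b))$. Interchanging the roles of $a$ and $b$, which is legitimate since the hypothesis $d(x,a)=d(x,b)$ is symmetric, yields the reverse inequality $\rho(f(x),f(b))\leqslant \rho(f(x),f(a))$, and the two together give the desired equality.

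Next I would prove~(\ref{e3}). Assume $d(x,a)<d(x,b)$; then $d(x,b)>0$, so the number $t_0=d(x,a)/d(x,b)$ is well defined and satisfies $t_0<1$. Since $d(x,a)\leqslant t_0\, d(x,b)$, inequality~(\ref{e0}) gives $\rho(f(x),f(a))\leqslant \eta(t_0)\rho(f(x),f(b))$, and because $\eta$ is strictly increasing we have $\eta(t_0)<\eta(1)=1$. To upgrade this to a strict inequality I would observe that $\rho(f(x),f(b))>0$: indeed $d(x,b)>0$ forces $x\neq b$, and since $f$ is an embedding it is injective, so $f(x)\neq f(b)$. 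Multiplying $\eta(t_0)<1$ by the positive quantity $\rho(f(x),f(b))$ then gives $\rho(f(x),f(a))\leqslant \eta(t_0)\rho(f(x),f(b))<\rho(f(x),f(b))$, which is~(\ref{e3}).

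The computations here are entirely routine; the only points demanding care, and thus the main obstacle, are the two structural facts that make the argument go through: that $\eta$ is genuinely strictly increasing (so that $t_0<1$ produces $\eta(t_0)<1$ rather than merely $\eta(t_0)\leqslant 1$), and that $\rho(f(x),f(b))$ is strictly positive (so that the strict inequality survives multiplication). Both follow from the standing assumptions, the former from $\eta$ being a self-homeomorphism of $[0,\infty)$ and the latter from injectivity of the embedding $f$.
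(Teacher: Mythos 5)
Your proof is correct and follows essentially the same route as the paper: equality~(\ref{e2}) via~(\ref{e0}) at $t=1$ applied symmetrically, and the strict inequality~(\ref{e3}) via $t_0=d(x,a)/d(x,b)<1$ together with the strict monotonicity of the homeomorphism $\eta$ and $\eta(1)=1$. Your explicit verification that $\rho(f(x),f(b))>0$ (from injectivity of the embedding) is a small point the paper leaves implicit, but it does not change the argument.
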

\begin{proof}
Let $d(x,a)=d(x,b)$ which is equivalent to
$$
(d(x,a)\leqslant d(x,b))\wedge (d(x,a)\geqslant d(x,b)).
$$
Hence~(\ref{e0}) with $t=1$ implies the right equality in~(\ref{e2}).

Let $d(x,a)< d(x,b)$. Then $d(x,a) = \alpha d(x,b)$ for some $\alpha<1$.
By~(\ref{e0}) we have
\begin{equation}\label{e4}
\rho(f(x),f(a))\leqslant \eta(\alpha)\rho(f(x),f(b)).
\end{equation}
Since $\eta(t)$ is a homeomorphism and $\eta(1)=1$ we have that $\eta(t)$ is strictly increasing and $\eta(\alpha)<1$. Hence, ~(\ref{e4}) implies the right inequality in~(\ref{e3}).
\end{proof}

\begin{proposition}\label{p333}
Let $(X,d)$ be an ultrametric space, $(Y,\rho)$ be a metric space and let $f\colon X\to Y$ be an $\eta$-quasisymmetric embedding with $\eta(1)=1$. Then $(f(X),\rho)$ is also an ultrametric space.
\end{proposition}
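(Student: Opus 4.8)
The plan is to show that the metric $\rho$, restricted to $f(X)$, satisfies the strong triangle inequality; since $(f(X),\rho)$ is automatically a metric space as a subspace of $(Y,\rho)$, this is all that remains. Concretely, I would fix an arbitrary triple of points $x,a,b\in X$ and prove
$$\rho(f(a),f(b))\leqslant\max\{\rho(f(x),f(a)),\rho(f(x),f(b))\}.$$
If two of the three points coincide the inequality is trivial, so I may assume $x,a,b$ are pairwise distinct. The target inequality is symmetric under interchanging $a$ and $b$, so I would assume without loss of generality that $d(x,a)\leqslant d(x,b)$.

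The key observation is that the ultrametricity of $X$ forces $d(a,b)\leqslant\max\{d(a,x),d(x,b)\}=d(x,b)$, i.e. the side $d(a,b)$ does not exceed $d(x,b)$. At this point Lemma~\ref{l32} does all the work: viewing $b$ as the common apex and comparing the distances $d(b,a)$ and $d(b,x)$, the relation $d(b,a)\leqslant d(b,x)$ splits into the two cases covered by~(\ref{e2}) and~(\ref{e3}). If $d(b,a)=d(b,x)$ then~(\ref{e2}) gives $\rho(f(b),f(a))=\rho(f(b),f(x))$, and if $d(b,a)<d(b,x)$ then~(\ref{e3}) gives $\rho(f(b),f(a))<\rho(f(b),f(x))$; in either case $\rho(f(a),f(b))\leqslant\rho(f(x),f(b))\leqslant\max\{\rho(f(x),f(a)),\rho(f(x),f(b))\}$, which is exactly what I need.

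The only real decision in the argument is choosing the correct apex for Lemma~\ref{l32}: because that lemma only compares distances measured from one fixed point, I must orient the triangle so that the side to be bounded, $d(a,b)$, shares the vertex $b$ from which it is dominated. Making the reduction $d(x,a)\leqslant d(x,b)$ first guarantees that $b$ is this vertex and that $d(a,b)\leqslant d(b,x)$, after which Lemma~\ref{l32} transfers the ordering of distances from $X$ to $f(X)$ verbatim. I do not expect any genuine obstacle beyond this bookkeeping — once the apex is fixed correctly, the hypothesis $\eta(1)=1$ enters only through Lemma~\ref{l32}, and no further properties of $\eta$ are required.
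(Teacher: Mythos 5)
Your proof is correct and follows essentially the same route as the paper: both arguments reduce the strong triangle inequality in $f(X)$ to Lemma~\ref{l32}, using the ultrametricity of $X$ to order the distances from a suitably chosen apex. The only difference is organizational — the paper normalizes the triple to the isosceles configuration $d(a,b)\leqslant d(x,a)=d(x,b)$ and transfers the whole configuration at once, while you fix the side to be bounded and order $d(x,a)\leqslant d(x,b)$; both are equally valid bookkeeping for the same idea.
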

\begin{proof}
Let $x',a',b' \in f(X)$ and let $x=f^{-1}(x')$, $a=f^{-1}(a')$, $b=f^{-1}(b')$. By the strong triangle inequality, without loss of generality, consider that $d(a,b)\leqslant d(x,a)=d(x,b)$.
By Lemma~\ref{l32} we have
\begin{equation}\label{e43}
\rho(a',b')\leqslant \rho(x',a')=\rho(x',b').
\end{equation}
Since $f$ is an embedding,~(\ref{e43}) is equivalent to the strong triangle inequality in the space $(f(X),\rho)$.
\end{proof}

\begin{remark}\label{r1}
Note that under the supposition of Proposition~\ref{p333} for all $a,b,x \in X$ Lemma~\ref{l32} implies the following equivalences
\begin{multline}\label{eq1}
(d(a,b)< d(x,a)=d(x,b)) \\ \Leftrightarrow (\rho(f(a),f(b))< \rho(f(x),f(a))=\rho(f(x),f(b))).
\end{multline}
\begin{multline}\label{eq2}
(d(a,b)=d(x,a)=d(x,b)) \\ \Leftrightarrow (\rho(f(a),f(b))= \rho(f(x),f(a))=\rho(f(x),f(b))).
\end{multline}
\end{remark}

\begin{proposition}\label{p33}
Let $(X,d)$ be a finite ultrametric space. Then the following conditions are equivalent:
\begin{itemize}
  \item [(i)] The set $B\subseteq X$ is a ball in $X$.
  \item [(ii)] For all $x,y \in B$ and for all $z\notin B$ the inequality
  \begin{equation}\label{e33}
        d(x,y)<d(x,z)=d(y,z)
  \end{equation}
      holds.
\end{itemize}
\end{proposition}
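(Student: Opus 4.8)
The plan is to prove the two implications separately, deriving everything directly from the strong triangle inequality; I would first record the standing convention that $B$ is nonempty, since (as explained below) the empty set is the only obstruction to the equivalence as literally stated.

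For the implication (i) $\Rightarrow$ (ii), I would write $B = B_r(c)$ for some center $c \in X$ and radius $r \geqslant 0$, and fix $x, y \in B$ and $z \notin B$. First, $d(x,y) \leqslant \max\{d(x,c), d(c,y)\} \leqslant r$, while $d(c,z) > r$ because $z \notin B$. The key step is to show $d(x,z) = d(c,z)$ using only ultrametricity: the upper bound $d(x,z) \leqslant \max\{d(x,c), d(c,z)\} = d(c,z)$ holds since $d(x,c) \leqslant r < d(c,z)$, and the reverse bound follows from $d(c,z) \leqslant \max\{d(c,x), d(x,z)\}$ together with $d(c,x) \leqslant r < d(c,z)$, which forces $d(x,z) \geqslant d(c,z)$. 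The identical argument gives $d(y,z) = d(c,z)$, so that $d(x,z) = d(y,z) = d(c,z) > r \geqslant d(x,y)$, which is precisely \eqref{e33}.

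For the implication (ii) $\Rightarrow$ (i), I would fix an arbitrary $c \in B$ and set $r = \max\{d(c,x) \colon x \in B\}$, the maximum existing because $X$ is finite; I would also pick $y^* \in B$ realizing this maximum, so $d(c, y^*) = r$. The inclusion $B \subseteq B_r(c)$ is immediate from the definition of $r$. For the reverse inclusion, suppose $w \in B_r(c)$ but $w \notin B$; then applying (ii) to the points $c, y^* \in B$ and $z = w \notin B$ yields $r = d(c, y^*) < d(c, w)$, contradicting $d(c,w) \leqslant r$. Hence $B_r(c) \subseteq B$ and $B = B_r(c)$ is a ball.

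The only real subtlety is the edge cases rather than the calculation. The hard point to flag is that the empty set satisfies (ii) vacuously yet is not a ball, so the statement genuinely requires $B \neq \emptyset$; I would therefore fix this convention at the outset. The case $B = X$ is harmless, since (ii) is then vacuously true and $X = B_{\diam X}(c)$ is a ball for any $c$, and the singleton case is covered by the general argument with $r = 0$. Apart from this, the main care needed is the two-sided bookkeeping in (i) $\Rightarrow$ (ii) that pins down $d(x,z) = d(c,z)$ from the strong triangle inequality alone.
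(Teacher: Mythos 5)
Your proof is correct, but it takes a genuinely different and more elementary route than the paper. The paper derives (i)$\Rightarrow$(ii) from the representing-tree machinery (Lemma~\ref{l2} and Proposition~\ref{lbpm}), and proves (ii)$\Rightarrow$(i) by contraposition: assuming $B$ is not a ball, it takes the node $v$ of $T_X$ with $L(T_v)=\tilde B$, the smallest ball containing $B$, picks $x,y\in B$ lying in two distinct direct successors of $v$ and some $z\in\tilde B\setminus B$, and observes that then $d(x,y)=\diam\tilde B$ dominates both $d(x,z)$ and $d(y,z)$, so~\eqref{e33} fails. You instead work directly from the strong triangle inequality: for (i)$\Rightarrow$(ii) you show $d(x,z)=d(c,z)=d(y,z)>r\geqslant d(x,y)$ when $B=B_r(c)$ (essentially the standard fact that every point of an ultrametric ball is a center), and for (ii)$\Rightarrow$(i) you exhibit $B$ as $B_r(c)$ with $c\in B$ fixed and $r$ the maximal distance from $c$ to points of $B$, excluding extraneous points of $B_r(c)$ by applying (ii) to $c$, a point $y^*$ realizing that maximum, and the putative extra point. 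Your argument avoids the tree apparatus entirely, uses finiteness only to attain the maximum, and has the additional merit of flagging the empty-set degeneracy (the empty set satisfies (ii) vacuously but is not a ball), which the paper's statement and proof pass over silently; the paper's route, in exchange, keeps the proposition explicitly tied to the representing tree, which is the object actually exploited in the subsequent ball-preservation results.
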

\begin{proof}
Implication (i)$\Rightarrow$(ii) follows directly from Lemma~\ref{l2} and Proposition~\ref{lbpm}.

Let condition (ii) hold and let $B$ be not a ball in $X$. Let us show that there exist $x,y\in B$ and $z\notin B$ such that inequality~(\ref{e33}) does not hold. Let $\tilde{B}$ be the smallest ball containing $B$ and let $v\in V(T_X)$ be such that $L(T_v)=\tilde{B}$, see Proposition~\ref{lbpm}. Then there exist at least two direct successors $v_1, v_2$ of $v$ such that $L(T_{v_{1}})\cap B\neq \varnothing  \neq L(T_{v_{2}})\cap B$. (In the case if there exists only one direct successor $v_1$ such that  $L(T_{v_{1}})\cap B\neq\varnothing $ we can take $v_1$ instead of $v$.) Since $B\neq \tilde{B}$ and $B\subseteq \tilde{B}$ there exists $z\in \tilde{B}\setminus B$. Pick any $x$ and $y$ such that $x\in L(T_{v_{1}})\cap B$, $y\in L(T_{v_{2}})\cap B$. By Lemma~\ref{l2} we have $d(x,y)=l(v)$. It is clear that for these $x$, $y$ and $z$ strict inequality~(\ref{e33}) is impossible since $x$ and $y$ are the diametrical points of the ball $\tilde B$, i.e., $d(x,y)=\diam \tilde{B}$,  and  $d(x,z)\leqslant d(x,y)$, $d(y,z)\leqslant d(x,y)$.
\end{proof}

Let $X$ and $Y$ be nonempty metric spaces. A mapping $F\colon X\to Y$ is \emph{ball-preserving} if the membership relations
\begin{equation*}%\label{e4.2}
F(Z)\in \textbf{B}_Y \quad \text{and}\quad F^{-1}(W)\in \textbf{B}_X
\end{equation*}
hold for all balls $Z\in \textbf{B}_X$ and all balls $W\in \textbf{B}_Y$, where $F(Z)$ is the image of $Z$ under the mapping $F$ and $F^{-1}(W)$ is the preimage of $W$ under this mapping.

\begin{proposition}\label{p32}
Let $(X,d)$, $(Y,\rho)$ be finite ultrametric spaces, $f\colon X\to Y$ be a surjective $\eta$-quasisymmetric embedding with $\eta(1)=1$. Then $f$ is a ball-preserving mapping.
\end{proposition}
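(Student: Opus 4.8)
The plan is to reduce everything to the metric characterisation of balls given in Proposition~\ref{p33} and then to transport that characterisation across $f$ using the biconditional~(\ref{eq1}) from Remark~\ref{r1}. First I would record that, being an embedding, $f$ is injective, and being surjective by hypothesis, $f$ is in fact a bijection of $X$ onto $Y$; consequently, for any subset $Z\subseteq X$ and any point $z\in X$ we have $z\notin Z$ if and only if $f(z)\notin f(Z)$, and every point of $Y$ outside $f(Z)$ has a unique preimage lying outside $Z$. This bookkeeping is what lets ``inside/outside'' pass through $f$ cleanly, and it is the only place where surjectivity is used.

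To prove that $f(Z)\in\mathbf{B}_Y$ for every ball $Z\in\mathbf{B}_X$, I would take arbitrary $x',y'\in f(Z)$ and $w'\notin f(Z)$ and write $x'=f(x)$, $y'=f(y)$, $w'=f(w)$ with $x,y\in Z$ and $w\notin Z$. Since $Z$ is a ball, Proposition~\ref{p33} gives $d(x,y)<d(x,w)=d(y,w)$, and then the equivalence~(\ref{eq1}) (read from left to right, with $w$ playing the role of the apex point) yields $\rho(x',y')<\rho(x',w')=\rho(y',w')$. As $x',y'\in f(Z)$ and $w'\notin f(Z)$ were arbitrary, Proposition~\ref{p33} applied in the space $Y$ shows that $f(Z)$ is a ball, i.e.\ $f(Z)\in\mathbf{B}_Y$.

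For the preimage direction I would argue symmetrically, exploiting that~(\ref{eq1}) is an equivalence. Let $W\in\mathbf{B}_Y$ and put $Z=f^{-1}(W)$. For arbitrary $x,y\in Z$ and $w\notin Z$ we have $f(x),f(y)\in W$ and $f(w)\notin W$; since $W$ is a ball, Proposition~\ref{p33} gives $\rho(f(x),f(y))<\rho(f(x),f(w))=\rho(f(y),f(w))$, and reading~(\ref{eq1}) from right to left returns $d(x,y)<d(x,w)=d(y,w)$. Hence $Z=f^{-1}(W)$ satisfies condition (ii) of Proposition~\ref{p33} and is therefore a ball in $X$. Alternatively, one may invoke Proposition~\ref{p1}: $f^{-1}$ is $\eta'$-quasisymmetric with $\eta'(1)=1/\eta^{-1}(1)=1$, so the first part applies verbatim to $f^{-1}$.

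The argument is short once the machinery is in place, so I do not anticipate a serious obstacle; the one point demanding care is the matching of the two characterisations. I must make sure that the strict-inequality-and-equality pattern $d(x,y)<d(x,z)=d(y,z)$ singled out in Proposition~\ref{p33} is exactly the relation preserved by~(\ref{eq1}), and that surjectivity really is needed both to guarantee $f(w)\notin f(Z)\Leftrightarrow w\notin Z$ and to produce the preimages; without bijectivity the passage of ``outside'' points through $f$ would break down.
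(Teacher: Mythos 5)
Your proposal is correct and follows essentially the same route as the paper: both reduce to the metric characterisation of balls in Proposition~\ref{p33} and transport it through $f$ via the equivalence~(\ref{eq1}), using bijectivity to pass ``inside/outside'' across $f$. The only cosmetic difference is that the paper runs the argument as a single chain of equivalences covering image and preimage at once, whereas you treat the two directions separately (and note the alternative via Proposition~\ref{p1}); the content is identical.
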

\begin{proof}
Recall that the surjectivity of $f$ implies its bijectivity.
The set $B$ is a ball in $X$ if and only if condition (ii) of Proposition~\ref{p33} holds. By~(\ref{eq1}) it is equivalent to the fact that the relations
$$
\rho(f(x),f(y))<\rho(f(x),f(z))=\rho(f(y),f(z))
$$
hold for all $x,y \in B$ and all $z\notin B$.
Since $f$ is a bijection it is equivalent to the fact that the relations
\begin{equation}\label{e19}
\rho(x',y')<\rho(x',z')=\rho(y',z').
\end{equation}
hold for all $x',y' \in f(B)$ and for all $z'\notin f(B)$. Eventually, by Proposition~\ref{p33} it is equivalent to the fact that  $f(B)$ is a ball in $Y$.
\end{proof}

Recall a definition of the isomorphism of graphs. Let $G_1$ and $G_2$ be finite graphs. A bijection $f\colon V(G_1)\to V(G_2)$ is an isomorphism of $G_1$ and $G_2$ if
\begin{equation*}%\label{d3.1e1}
(\{u,v\} \in E(G_1)) \Leftrightarrow (\{f(u),f(v)\} \in E(G_2))
\end{equation*}
holds for all $u$, $v \in V(G_1)$. The graphs $G_1$ and $G_2$ are isomorphic if there exists an isomorphism $f\colon V(G_1) \to V(G_2)$.

If $G_1 = G_1(r_1)$ and $G_2 = G_2(r_2)$ are rooted graphs, then $G_1$ and $G_2$ are isomorphic as rooted graphs if $G_1$ and $G_2$ are isomorphic as free (unrooted) graphs and there is an isomorphism $f\colon V(G_1) \to V(G_2)$ such that $f(r_1) = r_2$.

\begin{theorem}[\cite{P(TIAMM)}]\label{th2.4}
Let \(X\) and \(Y\) be finite nonempty ultrametric spaces with the representing trees \(T_X\) and \(T_Y\). Then the following statements are equivalent.
\begin{enumerate}
\item\label{th2.4:s1} The representing trees \(T_X\) and \(T_Y\) are isomorphic as rooted trees.
\item\label{th2.4:s2} There is a bijective ball-preserving mapping \(\Phi \colon X \to Y\).
\end{enumerate}
\end{theorem}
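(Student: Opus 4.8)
The plan is to translate both statements into the language of the ball inclusion order and to exploit the bijection between the vertex set $V(T_X)$ and the ballean $\mathbf{B}_X$ furnished by Proposition~\ref{lbpm}. Under this bijection a node $v$ corresponds to the ball $L(T_v)$, the root corresponds to the maximal ball $X$, the leaves correspond to the one-point balls $\{x\}$, $x\in X$, and the parent--child (adjacency) relation of the rooted tree corresponds to the covering relation of the partial order $(\mathbf{B}_X,\subseteq)$: a node $v_1$ is a direct successor of $v$ precisely when $L(T_{v_1})$ is a maximal ball properly contained in $L(T_v)$. This last identification is the structural fact on which everything rests, and I would establish it first, directly from the recursive construction of $T_X$, using that in an ultrametric space every ball strictly contained in a ball $B$ lies entirely inside one of the parts $X_1,\ldots,X_k$ of the diametrical decomposition of $B$.

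To show that statement \ref{th2.4:s1} implies statement \ref{th2.4:s2}, I would start from a rooted-tree isomorphism $\psi\colon V(T_X)\to V(T_Y)$. Since $\psi$ fixes the root and preserves adjacency, it preserves the parent--child relation and hence maps the subtree $T_v$ isomorphically onto $T_{\psi(v)}$ for every node $v$; in particular it restricts to a bijection between the leaves of $T_X$ and those of $T_Y$, which yields a bijection $\Phi\colon X\to Y$ defined by $\{\Phi(x)\}=\psi(\{x\})$. For an arbitrary ball $Z=L(T_v)$ the leaves of $T_v$ map onto the leaves of $T_{\psi(v)}$, so $\Phi(Z)=L(T_{\psi(v)})$, which is again a ball by Proposition~\ref{lbpm}. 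Thus $\Phi$ carries balls to balls, and applying the same argument to $\psi^{-1}$ shows that $\Phi^{-1}$ does too; hence $\Phi$ is ball-preserving.

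For the converse, that statement \ref{th2.4:s2} implies statement \ref{th2.4:s1}, I would argue in the opposite direction. Given a bijective ball-preserving $\Phi$, Proposition~\ref{lbpm} lets me define $\psi\colon V(T_X)\to V(T_Y)$ by declaring $\psi(v)$ to be the unique node with $L(T_{\psi(v)})=\Phi(L(T_v))$, the image $\Phi(L(T_v))$ being a ball by hypothesis. Using $\Phi^{-1}$ in the same way produces the inverse of $\psi$, so $\psi$ is a bijection of vertex sets. Because $\Phi$ is a bijection it preserves inclusions among subsets in both directions, hence it is an order isomorphism of $(\mathbf{B}_X,\subseteq)$ onto $(\mathbf{B}_Y,\subseteq)$; in particular it sends the maximum $X$ to the maximum $Y$, so $\psi$ takes the root to the root, and it preserves the covering relation. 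By the structural identification above the covering relation is exactly adjacency in the representing tree, so $\psi$ preserves edges and is an isomorphism of rooted trees.

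The main obstacle I anticipate is the structural lemma underlying both implications, namely that the edges of $T_X$ coincide with the covers of $(\mathbf{B}_X,\subseteq)$; once this is in hand, both directions reduce to the essentially formal observation that a ball-preserving bijection and a rooted-tree isomorphism are two packagings of the same order isomorphism between balleans. A secondary point requiring care is that a rooted-tree isomorphism genuinely preserves the parent--child relation and therefore maps leaves to leaves and subtrees to subtrees; this follows from preservation of the root together with adjacency, since the distance of a vertex from the root is an isomorphism invariant in a tree.
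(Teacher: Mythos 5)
The paper does not prove this theorem at all: it is imported verbatim from the Russian-language reference \cite{P(TIAMM)}, so there is no in-text argument to compare yours against. Judged on its own, your proposal is a correct and essentially complete strategy. The load-bearing step is exactly the one you isolate: the edge set of $T_X$ coincides with the covering relation of $(\mathbf{B}_X,\subseteq)$ under the node--ball bijection of Proposition~\ref{lbpm}. That lemma does hold, and your sketch of it is the right one --- in an ultrametric space any proper ball $B_r(c)\subsetneq L(T_v)$ satisfies $\diam B_r(c)\leqslant r < d(c,z)\leqslant \diam L(T_v)$ for any $z\notin B_r(c)$, so it meets no diametrical pair of $L(T_v)$ and hence lies in a single part $X_i$ of the decomposition from Theorem~\ref{t13}, while each $X_i$ is itself a ball that cannot be properly enlarged without acquiring diameter $\diam L(T_v)$; this gives both containment and maximality. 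Granting that, both implications reduce, as you say, to repackaging an order isomorphism of balleans as a rooted-tree isomorphism and vice versa, and your handling of the secondary points (root and level preservation forcing leaf-to-leaf and subtree-to-subtree behaviour, and $Z\mapsto\Phi(Z)$ being a bijection $\mathbf{B}_X\to\mathbf{B}_Y$ because $\Phi^{-1}(W)$ is a ball mapping onto $W$) is sound. Whether this is the same route as the original proof in \cite{P(TIAMM)} cannot be checked from the present paper, but it is a natural and valid one.
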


By Proposition~\ref{p32} and Theorem~\ref{th2.4} we have the following.
\begin{theorem}\label{t34}
Let $(X,d)$, $(Y,\rho)$ be finite ultrametric spaces with the representing trees $T_X$ and $T_Y$, respectively, and let $f\colon X\to Y$ be a surjective  $\eta$-quasisymmetric embedding with $\eta(1)=1$. Then $T_X$ and $T_Y$ are isomorphic as rooted trees.
\end{theorem}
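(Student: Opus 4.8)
The plan is to obtain Theorem~\ref{t34} as an immediate consequence of the two preceding results, so that the real work reduces to checking that their hypotheses are met and that they chain together correctly. First I would observe that an $\eta$-quasisymmetric embedding is by definition injective, and surjectivity of $f$ is assumed; hence $f$ is a bijection between the finite sets $X$ and $Y$. This is essentially the only hypothesis-matching step that requires a moment's thought, because Theorem~\ref{th2.4} calls precisely for a \emph{bijective} ball-preserving map, not merely a ball-preserving one.

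Next I would invoke Proposition~\ref{p32}: since $f$ is a surjective $\eta$-quasisymmetric embedding with $\eta(1)=1$ between finite ultrametric spaces, it is ball-preserving, i.e. it carries each $Z\in\mathbf{B}_X$ to a ball $f(Z)\in\mathbf{B}_Y$ and pulls each $W\in\mathbf{B}_Y$ back to a ball $f^{-1}(W)\in\mathbf{B}_X$. Combined with the bijectivity noted above, this exhibits $f$ as a bijective ball-preserving mapping $X\to Y$, which is exactly statement~\ref{th2.4:s2} of Theorem~\ref{th2.4}. Applying the equivalence of Theorem~\ref{th2.4} with $\Phi=f$ then yields statement~\ref{th2.4:s1}, namely that $T_X$ and $T_Y$ are isomorphic as rooted trees, which is the desired conclusion.

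I expect no serious obstacle along this route, since the entire substance of the theorem lives in Proposition~\ref{p32} and Theorem~\ref{th2.4}. If one instead wished to unwind the argument without citing Theorem~\ref{th2.4}, the main difficulty would be constructing the tree isomorphism directly from the ball-preserving bijection. The natural approach would be to use Proposition~\ref{lbpm} to identify each vertex $v\in V(T_X)$ with the ball $L(T_v)\in\mathbf{B}_X$, define the vertex correspondence $v\mapsto v'$, where $v'$ is the unique node of $T_Y$ with $L(T_{v'})=f(L(T_v))$, and then verify that this map is a bijection sending the root $X$ to the root $Y$ and respecting the ancestor/successor relation (equivalently, the inclusion order on balls). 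Establishing that this is a well-defined root-preserving graph isomorphism is the delicate part, but it is exactly the content that Theorem~\ref{th2.4} already packages, so in the present proof it can simply be quoted.
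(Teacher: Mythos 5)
Your proposal is correct and matches the paper exactly: the paper derives Theorem~\ref{t34} immediately from Proposition~\ref{p32} (which gives the ball-preserving property) together with Theorem~\ref{th2.4} (the equivalence with rooted-tree isomorphism), with the bijectivity of $f$ following from injectivity of an embedding plus the assumed surjectivity. Your additional sketch of how one might unwind Theorem~\ref{th2.4} is not needed but is consistent with the cited machinery.
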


\textbf{Acknowledgment.} The research of the first author was partially supported by the National Academy of Sciences of Ukraine, Project 0117U002165 ``Development of mathematical models, numerically analytical methods and algorithms for solving modern medico-biological problems''.

\end{document}